\theoremstyle{theorem}
\newtheorem{theorem}{Theorem}[section]
\newtheorem{lemma}[theorem]{Lemma}
\newtheorem{proposition}[theorem]{Proposition}
\theoremstyle{definition}
\newtheorem{definition}[theorem]{Definition}
\theoremstyle{remark}
\newtheorem{remark}{Remark}
\theoremstyle{example}
\newtheorem{example}[theorem]{Example}
\title[Fixed points of automorphisms...]{Fixed points of automorphisms of certain non-cyclic $p$-groups and the dihedral group}
\author{Akhtar Abbas, Umar Hayat, and Daniel L\'{o}pez-Aguayo}
\address{Department of Mathematics, Quaid-i-Azam University, Islamabad, Pakistan.}
\email{achtarabas@gmail.com}
\email{umar.hayat@qau.edu.pk}
\address{Tecnologico de Monterrey, Escuela de Ingenier\'{i}a y Ciencias, Hidalgo, M\'{e}xico.}
\email{dlopez.aguayo@itesm.mx}
\begin{document}
\maketitle
\begin{abstract} Let $G=\mathbf{Z}_{p} \oplus \mathbf{Z}_{p^2}$, where $p$ is a prime number. Suppose that $d$ is a divisor of the order of $G$. In this paper we find the number of automorphisms of $G$ fixing $d$ elements of $G$, and denote it by $\theta(G,d)$. As a consequence, we prove a conjecture of Checco-Darling-Longfield-Wisdom. We also find the exact number of fixed-point-free automorphisms of the group $\mathbf{Z}_{p^{a}} \oplus \mathbf{Z}_{p^{b}}$, where $a$ and $b$ are positive integers with $a<b$. Finally, we compute $\theta(D_{2q},d)$, where $D_{2q}$ is the dihedral group of order $2q$, $q$ is an odd prime and $d \in \{1,q,2q\}$.
\end{abstract}

\section{Introduction}
Automorphisms of groups, algebras, Lie algebras and codes are of fundamental importance in many areas of mathematics and other disciplines; see, for more details, \cite{3,6,7}. In \cite{5}, Farmakis and Moskowitz explain the importance of the fixed point theorems and their applications  in many areas of mathematics including, but not limited to, analysis, algebraic groups, number theory, complex analysis and group theory. For example (cf. \cite[theorem 5.4.1]{5}), if $\alpha$ is an automorphism of a finite dimensional real or complex Lie algebra $\mathfrak{g}$ having $0$ as a fixed point, then the algebra $\mathfrak{g}$ is nilpotent. Needless to say, nilpotent algebras appear naturally when one studies the structure of Lie algebras. On the other hand, the set of fixed points of an automorphism may  contain useful information. For instance, in \cite{8} Gersten proved a famous conjecture proposed by Scott that states that if $\phi$ is an automorphism of a finitely generated free group, then the set of fixed points of $\phi$ is finitely generated.
Also, in areas such as image watermarking and live video streaming, toral automorphisms are used; see, for example, \cite{7}. \\

We now describe the main results of this paper. 
Let $G$ be a finite group and let $d$ be a divisor of the order of $G$. Then we define $\theta(G,d)$ as the number of automorphisms of $G$ fixing $d$ elements of $G$. In \cite{2}, the authors discuss the fixed points of automorphisms of some finite abelian groups. They study cyclic groups and elementary abelian groups. One of their main results is the derivation of formulas for $\theta (G, d)$, where $d$ is a divisor of the order of $G$. In \cite[p.66]{2}, the following conjecture is stated.  \\

\textbf{Conjecture 1} \label{conj1}. Let $G=\mathbf{Z}_{p} \oplus \mathbf{Z}_{p^2}$, where $p$ is a prime number. Then 
\[
\theta(G,d)=
\begin{cases}
p^3(p-2)^2 & \text{for} \ d=1; \\
p(2p^3-4p^2+1) & \text{for} \ d=p; \\
p^3-p-1 & \text{for} \ d=p^2; \\
1 & \text{for} \ d=p^3.
\end{cases}
\]
In this paper, we prove the above conjecture, see Theorem \ref{theo2}. In contrast to \cite{9}, where the explicit form of the automorphism group of $\mathbf{Z}_{p} \oplus \mathbf{Z}_{p^3}$ is found, here we will make use of Theorem \ref{theo1}. This theorem, together with Lemmas \ref{lemma1}, \ref{lemma2}, \ref{lemma3}, will allow us to find the explicit matrix form of the automorphism group of $\mathbf{Z}_{p} \oplus \mathbf{Z}_{p^2}$ in a more concise way. 

In Section \ref{section4}, we find the exact number of fixed-point-free automorphisms of the group $\mathbf{Z}_{p^{a}} \oplus \mathbf{Z}_{p^{b}}$, where $a$ and $b$ are positive integers with $a<b$. Finally, in Section \ref{section5}, we compute $\theta(D_{2q},d)$; where $D_{2q}$ is the dihedral group of order $2q$, $q$ is an odd prime, and $d \in \{1,q,2q\}$. \\
Throughout this paper, we denote the cardinality of a set $X$ by $|X|$. 
\section{Preliminaries}

\begin{definition} Let $G$ be a group. The set of all automorphisms of $G$ under function composition forms a group, called the automorphism group of $G$ and it is denoted by $\operatorname{Aut}G$.
\end{definition}

\begin{definition}
For a group $G$, the fixed point group of an automorphism $f$ is defined as $$\Omega_{G}(f)=\{g\in G: f(g)=g\},$$ where $\Omega_{G}$ is a map from $\operatorname{Aut}G$ to the collection of all subgroups of $G$. The map $\Omega_{G}$ is known as the fixed point map.
\end{definition}

\begin{definition}
For each divisor $d$ of $|G|$, we denote the set of $d$-fixers as $$S_{d}^{G}=\{f\in \operatorname{Aut}G: |\Omega_{G}(f)|=d\}$$ and $\theta(G,d)=|S_{d}^{G}|$.
\end{definition}

\begin{definition} \label{defomega}
Let $G$ be a group. For a non-empty subset $X$ of $G$, we denote the set of automorphisms of $G$ fixing at least all elements of $X$, as $X_{\Omega}$. That is $$X_{\Omega}=\{f\in \operatorname{Aut}G: X\subseteq \Omega_{G}(f)\}.$$
\end{definition}

\begin{definition}
Let $G$ be a group and $H$ a subgroup of $G$. We denote the set of all automorphisms fixing $exactly$ all elements of $H$ by $\Omega^{-1}_{G}(H)$. That is
  $$ \Omega^{-1}_{G}(H)=\{f\in \operatorname{Aut}G:~\Omega_{G}(f)=H\}.$$
\end{definition}
\begin{remark}
Let $G$ be a finite group. By Lagrange's theorem, $\operatorname{Aut}G$ can be written as the disjoint union of the sets $\{\phi \in \operatorname{Aut}G: |\Omega_{G}(\phi)|=d\}$ where $d$ runs over all divisors of $|G|$. Thus, if $\{H_{i}:~1\leq i\leq n\}$ is the collection of all subgroups of order $d$, then $$\theta(G,d)=\sum_{i=1}^{n}|\Omega^{-1}_{G}(H_{i})|.$$
\end{remark}

\begin{definition} Let $n$ be a positive integer. Then the Euler's totient function $\varphi(n)$ is defined as
\begin{center}
$\varphi(n)=|\{m: m \in [1,n] \wedge (m,n)=1\}|$
\end{center}
where $(m,n)$ denotes the greatest common divisor of $m$ and $n$.
\end{definition}

\begin{definition} Let $n \geq 2$ be an integer and let $\mathbf{Z}_{n}$ denote the group of integers modulo $n$. The group of units of $\mathbf{Z}_{n}$, which is denoted by $\mathbf{Z}_{n}^{\ast}$, consists of all those congruence classes which are relatively prime to $n$; thus $|\mathbf{Z}_{n}^{\ast}|=\varphi(n)$.
\end{definition}

\begin{definition} Let $G$ be a group and let $\phi: G \rightarrow G$ be an automorphism of $G$. It is said that $\phi$ is a fixed-point-free automorphism if $\phi$ fixes only the identity element of $G$.
\end{definition}

\begin{definition} Let $G$ be a group and let $\operatorname{Aut}G$ be its automorphism group. The \emph{holomorph} of $G$ is defined as the following semi-direct product $\operatorname{Hol}(G)=G \rtimes \operatorname{Aut}G$, where the product $\ast$ is given by $(g,\alpha) \ast (h,\beta)=(g\alpha(h),\alpha \beta)$.
\end{definition}

\begin{definition} For a positive integer $n$, the dihedral group $D_{2n}$ is defined as the set of all rotations and reflections of a regular polygon with $n$ sides.
\end{definition}

\section{Proof of Conjecture 1}
In what follows, if $S$ is a subset of a group $G$, then $\langle S \rangle$ denotes the subgroup generated by $S$. \\
By \cite[Theorem 3.3]{11} and \cite[Theorem 4.2]{11}, the group $\mathbf{Z}_{p} \oplus \mathbf{Z}_{p^2}$ has $\frac{p^2-1}{p-1}=p+1$ subgroups of order $p^2$, namely
\begin{align*}
J_{k}&= \left \langle \begin{pmatrix} k \\ 1 \end{pmatrix}\right \rangle, k \in \mathbf{Z}_{p}; \\
J_{p}&= \left \langle \begin{pmatrix} 1 \\ 0 \end{pmatrix} , \begin{pmatrix} 0 \\ p \end{pmatrix} \right \rangle.
\end{align*}
We first find the number of automorphisms of $\mathbf{Z}_{p} \oplus \mathbf{Z}_{p^2}$ which fix at least $p^2$ elements. The strategy we will follow is to find the number of automorphisms that fix, at least, the above $p+1$ subgroups of order $p^2$ and then substract the identity map. \\

The next theorem will play a crucial role in the proof of Conjecture \ref{conj1}. This theorem is proved in \cite[Theorem 3.2]{1}; however, for the reader's convenience, we include the statement.

\begin{theorem} \cite[Theorem 3.2]{1} \label{theo1} Let $H$ and $K$ be finite abelian groups, where $H$ and $K$ have no common direct factor. Then
\[
\operatorname{Aut}(H \times K)=
\left\{
\begin{pmatrix}
\alpha & \beta \\
\gamma & \delta 
\end{pmatrix}: \alpha \in \operatorname{Aut}H, \beta \in \operatorname{Hom}(K,Z(H)), \gamma \in \operatorname{Hom}(H,Z(K)), \delta \in \operatorname{Aut}K
\right\}
\]
\end{theorem}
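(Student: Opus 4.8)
The plan is to realise $\operatorname{End}(H\times K)$ as a ring of $2\times 2$ matrices and then to single out its units. Using the inclusions $\iota_{H},\iota_{K}$ and projections $\pi_{H},\pi_{K}$ of the two factors, every endomorphism $\phi$ of $H\times K$ is recorded by the matrix $\bigl(\begin{smallmatrix}\alpha&\beta\\\gamma&\delta\end{smallmatrix}\bigr)$ with $\alpha=\pi_{H}\phi\iota_{H}$, $\beta=\pi_{H}\phi\iota_{K}$, $\gamma=\pi_{K}\phi\iota_{H}$, $\delta=\pi_{K}\phi\iota_{K}$, and composition of endomorphisms becomes matrix multiplication. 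Since $H$ and $K$ are abelian, $Z(H)=H$ and $Z(K)=K$, so the off-diagonal entries range over all of $\operatorname{Hom}(K,H)$ and $\operatorname{Hom}(H,K)$, and conversely every such matrix of homomorphisms is an endomorphism via $(h,k)\mapsto(\alpha(h)+\beta(k),\gamma(h)+\delta(k))$. Thus the theorem is exactly the statement that this ring isomorphism carries the group of units onto the matrices with $\alpha\in\operatorname{Aut}H$ and $\delta\in\operatorname{Aut}K$. As $\operatorname{Aut}$ and $\operatorname{Hom}$ of finite abelian groups decompose over primary components and homomorphisms between components at different primes vanish, I would first reduce to the case in which $H$ and $K$ are abelian $p$-groups, noting that the no-common-direct-factor hypothesis descends to each primary component.

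The key point, and the place where the hypothesis is used, is the following claim: for $p$-groups $H,K$ with no common direct factor, every composite $\beta\gamma\in\operatorname{End}H$ with $\beta\in\operatorname{Hom}(K,H)$ and $\gamma\in\operatorname{Hom}(H,K)$ acts nilpotently on the Frattini quotient $H/pH$, and symmetrically $\gamma\beta$ on $K/pK$. The conceptual reason is that if $\beta\gamma$ were an automorphism then $\gamma$ would be a split monomorphism, so a copy of $H$ would be a direct summand of $K$, producing a common direct factor. To get genuine nilpotency I would filter $V=H/pH$ by the images $V_{\le a}$ of the subgroups $H[p^{a}]=\{x:p^{a}x=0\}$; concretely $V_{\le a}$ is spanned by the classes of generators of cyclic factors of order at most $p^{a}$. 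Every homomorphism can only lower orders, so $\bar\alpha,\bar\beta,\bar\gamma,\bar\delta$ all respect this filtration, and those coming from automorphisms preserve it. On the associated graded piece in degree $a$ the relevant dimensions are the multiplicities of $p^{a}$ as a factor of $H$ and of $K$; the hypothesis forces at least one of these to vanish for every $a$, so the induced diagonal map of any $\beta\gamma$- or $\gamma\beta$-type composite is zero. Hence such composites strictly lower the filtration and are nilpotent, and the same holds after inserting the filtration-preserving maps $\bar\alpha^{-1},\bar\delta^{-1}$.

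With the claim in hand both inclusions follow from the standard fact that an endomorphism of a finite $p$-group is an automorphism if and only if the induced map on its Frattini quotient is invertible. For sufficiency, given $\alpha\in\operatorname{Aut}H$ and $\delta\in\operatorname{Aut}K$ I would factor $\phi=\operatorname{diag}(\alpha,\delta)\,\bigl(\begin{smallmatrix}1&\alpha^{-1}\beta\\\delta^{-1}\gamma&1\end{smallmatrix}\bigr)$; the first factor is visibly an automorphism, and the second is invertible because its lower Schur complement $1-(\delta^{-1}\gamma)(\alpha^{-1}\beta)$ is $1$ minus a nilpotent-on-$K/pK$ operator, hence invertible on $K/pK$ and therefore an automorphism of $K$. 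For the converse, writing the blocks of $\phi^{-1}$ as $\alpha',\beta',\gamma',\delta'$ and reading off $\alpha\alpha'+\beta\gamma'=1_{H}$, the term $\beta\gamma'$ is nilpotent on $H/pH$, so $\bar\alpha\bar{\alpha'}$ is unipotent and in particular $\bar\alpha$ is invertible; thus $\alpha\in\operatorname{Aut}H$, and symmetrically $\delta\in\operatorname{Aut}K$.

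I expect the main obstacle to be the key claim of the second paragraph: verifying that the no-common-direct-factor hypothesis really forces the off-diagonal composites into the radical of the endomorphism ring, equivalently makes them nilpotent on the Frattini quotient. Everything else is either formal block-matrix algebra or the routine Frattini criterion; it is precisely this nilpotency, proved through the order filtration and the vanishing of a graded multiplicity on one side in each degree, that isolates why ``no common direct factor'' is the exact condition under which $\operatorname{Aut}(H\times K)$ takes the stated form with arbitrary off-diagonal homomorphisms and invertible diagonal blocks.
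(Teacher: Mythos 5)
The paper itself gives no proof of this statement: it is quoted verbatim from \cite{1} and used as a black box, so there is no internal argument to compare yours against; I can only judge your proposal on its own merits, and it is correct and essentially complete. The matrix realisation of $\operatorname{End}(H\times K)$ and the reduction to primary components are standard and unproblematic; the real content is your key claim that the off-diagonal composites $\beta\gamma$ and $\gamma\beta$ act nilpotently on the Frattini quotients, and your filtration of $H/pH$ by the images of the torsion subgroups $H[p^{a}]$ does establish it: every homomorphism sends $H[p^{a}]$ into $K[p^{a}]$ and hence respects the filtration, the degree-$a$ graded piece has dimension equal to the multiplicity of $\mathbf{Z}_{p^{a}}$ in the corresponding group, and the no-common-direct-factor hypothesis forces one of the two multiplicities to vanish in each degree, so every such composite strictly lowers the filtration. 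The Schur-complement factorisation for sufficiency and the identity $\alpha\alpha'+\beta\gamma'=1_{H}$ for necessity then close the argument via the standard fact that an endomorphism of a finite abelian $p$-group is an automorphism precisely when it is invertible modulo $p$. Two small cautions, neither a gap: your ``conceptual reason'' (a split monomorphism would yield a common direct factor) is only motivation, since failing to be an automorphism is much weaker than being nilpotent --- the filtration computation is genuinely needed, as you acknowledge; and in the sufficiency step it is worth saying explicitly that inserting $\bar\alpha^{-1}$ and $\bar\delta^{-1}$ does not disturb the vanishing of the graded composites because automorphisms preserve, not merely respect, the filtration. Your route is in the same spirit as the source the paper cites --- a $2\times 2$ block description in which the off-diagonal composites are shown to lie in the radical of the endomorphism ring --- with the order filtration serving as a concrete and self-contained way of exhibiting that radical membership.
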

where $Z(G)$ denotes the center of the group $G$. \\

We will also make use of the following lemmas.
\begin{lemma} \label{lemma1}There exists an isomorphism of abelian groups $\operatorname{Hom}_{\mathbf{Z}}(\mathbf{Z}_{n},\mathbf{Z}_{m}) \cong \mathbf{Z}_{(m,n)}$.
\end{lemma}
\begin{proof} This is a standard fact; see, for example, \cite[p.118]{10}.
\end{proof}

\begin{lemma} \label{lemma2} The finite indecomposable abelian groups are exactly the $p$-cyclic groups.
\end{lemma}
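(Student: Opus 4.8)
The plan is to prove the two inclusions separately, invoking the fundamental theorem of finite abelian groups for the harder direction. Recall that a finite abelian group $G$ is \emph{indecomposable} if whenever $G = A \oplus B$ with $A, B$ subgroups of $G$, one of $A$ or $B$ is trivial, while a \emph{$p$-cyclic} group is a cyclic group $\mathbf{Z}_{p^k}$ of prime-power order.

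First I would show that each $\mathbf{Z}_{p^k}$ is indecomposable. The key structural fact is that a cyclic group of order $p^k$ possesses exactly one subgroup of each order dividing $p^k$, and these subgroups are totally ordered by inclusion; in particular there is a unique subgroup of order $p$, which is contained in every nontrivial subgroup. Hence if $\mathbf{Z}_{p^k} = A \oplus B$ with both $A$ and $B$ nontrivial, then this order-$p$ subgroup lies in $A \cap B$, contradicting $A \cap B = \{0\}$. Therefore no such decomposition exists and $\mathbf{Z}_{p^k}$ is indecomposable.

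For the converse, I would appeal to the fundamental theorem of finite abelian groups, which asserts that every finite abelian group $G$ is isomorphic to a direct sum $\mathbf{Z}_{p_1^{a_1}} \oplus \cdots \oplus \mathbf{Z}_{p_n^{a_n}}$ of cyclic groups of prime-power order. If $G$ is indecomposable, this direct sum can contain only a single summand, for otherwise $G$ would split nontrivially; thus $G \cong \mathbf{Z}_{p^a}$ for some prime $p$ and positive integer $a$, which is exactly the assertion that $G$ is $p$-cyclic.

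Both directions are largely routine, and the only point requiring genuine care is the forward one, where I must justify that the order-$p$ subgroup of $\mathbf{Z}_{p^k}$ is truly unique and common to all nontrivial subgroups; a naive argument that merely exhibits two subgroups with trivial intersection would fail to rule out a splitting. This obstacle is dissolved by the classification of subgroups of a finite cyclic group, which guarantees both the uniqueness of the minimal subgroup and the chain structure used above.
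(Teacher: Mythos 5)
Your proof is correct. The paper itself disposes of this lemma in one line, deriving both directions ``at once'' from the fundamental theorem of finitely generated abelian groups: the reverse direction (an indecomposable finite abelian group must be $p$-cyclic) follows from the existence of the prime-power cyclic decomposition, exactly as in your second paragraph, while the forward direction (each $\mathbf{Z}_{p^k}$ is indecomposable) is implicitly obtained from the \emph{uniqueness} clause of that theorem --- a nontrivial splitting $\mathbf{Z}_{p^k} = A \oplus B$ would, after decomposing $A$ and $B$, yield a second prime-power decomposition of $\mathbf{Z}_{p^k}$ with at least two factors, contradicting uniqueness. You replace this appeal to uniqueness with a self-contained lattice argument: the subgroups of $\mathbf{Z}_{p^k}$ form a chain, so the unique subgroup of order $p$ lies in any two nontrivial subgroups, forcing a nontrivial intersection. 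Your route is more elementary in that direction (it needs only the classification of subgroups of a finite cyclic group, not the uniqueness half of the structure theorem), at the cost of being longer; the paper's citation is maximally brief but leans on the full strength of the fundamental theorem. You also correctly flag the pitfall --- exhibiting a pair of subgroups with trivial intersection proves nothing; one must rule out \emph{every} candidate splitting --- which is precisely what the chain structure accomplishes.
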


\begin{proof} This follows at once from the fundamental theorem of finitely generated abelian groups.
\end{proof}

\begin{lemma} \label{lemma3} The group $\operatorname{Aut}\mathbf{Z}_{n}$ is isomorphic to the group of units of $\mathbf{Z}_{n}$.
\end{lemma}

\begin{proof} It suffices to consider the evaluation map $T: \operatorname{Aut}\mathbf{Z}_{n} \rightarrow \mathbf{Z}_{n}^{\ast}$ given by $T(\alpha)=\alpha(1)$.
\end{proof}

Using Lemmas \ref{lemma1}, \ref{lemma2}, \ref{lemma3} and Theorem \ref{theo1}, one obtains that the group $\operatorname{Aut}(\mathbf{Z}_{p} \oplus \mathbf{Z}_{p^2})$ may be realized as the following set of $2 \times 2$ matrices
\begin{center}
$\left\{ \begin{pmatrix} a & b \\ pc & d \end{pmatrix}: a \in \mathbf{Z}_{p}^{\ast}, d \in \mathbf{Z}_{p^2}^{\ast}, b,c \in \mathbf{Z}_{p}\right\}$
\end{center}

\begin{remark} \label{rem2}
It follows from the above characterization that $|\operatorname{Aut}(\mathbf{Z}_{p} \oplus \mathbf{Z}_{p^2})|=\varphi(p)\varphi(p^2)p^2=p^{3}(p-1)^{2}$. 
\end{remark}

We now find the number of automorphisms of $\mathbf{Z}_{p} \oplus \mathbf{Z}_{p^2}$ fixing $p^2$ elements.
\begin{proposition} \label{prop1} We have $\theta(\mathbf{Z}_{p} \oplus \mathbf{Z}_{p^2},p^2)=p^3-p-1$.
\end{proposition}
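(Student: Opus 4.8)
The plan is to count first the automorphisms fixing \emph{at least} $p^2$ elements and then to delete the unique automorphism fixing all of $G$. Since $\Omega_G(f)$ is always a subgroup of $G$ and $|G|=p^3$, an automorphism $f$ satisfies $|\Omega_G(f)|\geq p^2$ exactly when $\Omega_G(f)$ contains one of the $p+1$ subgroups $J_0,\dots,J_{p-1},J_p$ of order $p^2$; in the language of Definition \ref{defomega} this says $f\in (J_i)_\Omega$ for some $i$. Hence the automorphisms fixing at least $p^2$ elements form precisely the union $\bigcup_i (J_i)_\Omega$. As this union is the disjoint union of the set $S_{p^2}^G$ with $S_{p^3}^G=\{\mathrm{id}\}$, we will have $\theta(G,p^2)=\big|\bigcup_i (J_i)_\Omega\big|-1$.

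The next step is to evaluate each $|(J_i)_\Omega|$ by means of the matrix realization $\begin{pmatrix} a & b \\ pc & d \end{pmatrix}$ of $\operatorname{Aut}(\mathbf{Z}_p\oplus\mathbf{Z}_{p^2})$ obtained above, whose action on a column vector $\begin{pmatrix} x \\ y \end{pmatrix}$ has first coordinate $ax+by$ reduced modulo $p$ and second coordinate $pcx+dy$ reduced modulo $p^2$. For a cyclic subgroup $J_k=\big\langle \begin{pmatrix} k \\ 1 \end{pmatrix}\big\rangle$ it suffices, since the map is additive, to require that the generator be fixed; this gives the two congruences $(a-1)k+b\equiv 0 \pmod p$ and $pck+d\equiv 1 \pmod{p^2}$. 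The first determines $b$ from $a$, and the second determines $d$ from $c$, the resulting $d\equiv 1\pmod p$ being automatically a unit. Thus $a\in\mathbf{Z}_p^\ast$ and $c\in\mathbf{Z}_p$ may be chosen freely and $|(J_k)_\Omega|=p(p-1)$. For the remaining subgroup $J_p=\big\langle \begin{pmatrix} 1 \\ 0 \end{pmatrix},\begin{pmatrix} 0 \\ p \end{pmatrix}\big\rangle$ I would impose that both generators be fixed; this forces $a=1$, $c=0$ and $d\equiv 1\pmod p$ while leaving $b$ free, so that $|(J_p)_\Omega|=p^2$.

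It remains to combine these counts. The structural point that makes the union transparent is that two distinct subgroups of order $p^2$ in a group of order $p^3$ generate the whole group; therefore $(J_i)_\Omega\cap (J_j)_\Omega=\{\mathrm{id}\}$ whenever $i\neq j$, since an automorphism fixing two distinct $J_i$ pointwise fixes all of $G$. In other words, every non-identity automorphism in the union lies in exactly one $(J_i)_\Omega$, while the identity lies in all $p+1$ of them. Consequently $\big|\bigcup_i (J_i)_\Omega\big|=\sum_i |(J_i)_\Omega|-p=\big(p\cdot p(p-1)+p^2\big)-p=p^3-p$, and subtracting the identity yields $\theta(G,p^2)=p^3-p-1$.

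I expect the main obstacle to be the correct translation of the condition ``$f$ fixes $J_i$ pointwise'' into congruences, because the two coordinates are read modulo different powers of $p$; in particular one must verify that the value of $d$ forced by the second congruence is genuinely a unit of $\mathbf{Z}_{p^2}$, and one must treat the non-cyclic subgroup $J_p$ separately from the cyclic ones $J_k$. Once these counts are in hand, the overlap argument is purely combinatorial and follows from the generation property of distinct order-$p^2$ subgroups.
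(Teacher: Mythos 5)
Your proof is correct and follows essentially the same route as the paper: both realize $\operatorname{Aut}(\mathbf{Z}_p\oplus\mathbf{Z}_{p^2})$ as matrices, count the automorphisms fixing each of the $p+1$ subgroups of order $p^2$ pointwise (getting $p(p-1)$ for each $J_k$ and $p^2$ for $J_p$), and use the fact that the only automorphism fixing more than one such subgroup is the identity. The only difference is bookkeeping --- you apply inclusion--exclusion to the union and subtract the identity once at the end, while the paper subtracts the identity from each count before summing --- and both yield $p^3-p-1$.
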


\begin{proof} a) We first deal with $J_{0}=\left \langle \begin{pmatrix} 0 \\ 1 \end{pmatrix} \right \rangle$. Let $\phi=\begin{pmatrix} a & b \\ pc & d \end{pmatrix}$ be an automorphism of $\mathbf{Z}_{p} \oplus \mathbf{Z}_{p^2}$ such that $\phi \begin{pmatrix} 0 \\ 1 \end{pmatrix}= \begin{pmatrix} 0 \\ 1 \end{pmatrix}$. In this case we get that $b=0$ and $d=1$; so it suffices to choose $a$ and $c$. There are $\varphi(p)=p-1$ choices for $a$; and $p$ choices for $c$. Hence we get $p(p-1)$ automorphisms. 

b) Now we treat the case $J_{k}=\left\langle \begin{pmatrix} k \\ 1 \end{pmatrix}\right \rangle$, where $k \in \mathbf{Z}_{p} \setminus \{0\}$. Let $\phi=\begin{pmatrix} a & b \\ pc & d \end{pmatrix} \in \operatorname{Aut}(\mathbf{Z}_{p} \oplus \mathbf{Z}_{p^2})$ be such that $\phi \begin{pmatrix} k \\ 1 \end{pmatrix}= \begin{pmatrix} k \\ 1 \end{pmatrix}$. Then we get the following pair of congruences
\begin{equation}
\begin{cases}
&ak+b \equiv k \pmod{p} \\
&pck+d \equiv 1 \pmod{p^2}
\end{cases}
\end{equation}
Here we get $p\varphi(p)$ automorphisms and since there are $p-1$ possible choices for $k$, then we obtain $p\varphi(p)(p-1)=p(p-1)^2$ automorphisms.

c) Finally, we deal with $J_{p}= \left \langle \begin{pmatrix} 1 \\ 0 \end{pmatrix} , \begin{pmatrix} 0 \\ p \end{pmatrix} \right \rangle$. We first find the form of those automorphisms that fix at least $\left \langle \begin{pmatrix} 1 \\ 0 \end{pmatrix} \right \rangle$; then those that fix at least $\left \langle \begin{pmatrix} 0 \\ p \end{pmatrix} \right \rangle$ and take the intersection of these sets of matrices. Suppose first that $\phi$ is an automorphism such that $\phi \begin{pmatrix} 1 \\0 \end{pmatrix} = \begin{pmatrix} 1 \\ 0 \end{pmatrix}$. This gives the following pair of congruences
\begin{equation}
\begin{cases}
&a \equiv 1 \pmod{p} \\
&pc \equiv 0 \pmod{p^2}
\end{cases}
\end{equation}
Hence $a=1$ and $c=0$. Thus $\phi$ has the form $\begin{pmatrix} 1 & b \\ 0 & d \end{pmatrix}$, where $b \in \mathbf{Z}_{p}$ and $d \in \mathbf{Z}_{p^2}^{\ast}$.
Now suppose that $\phi$ is such that $\phi \begin{pmatrix} 0 \\p \end{pmatrix} = \begin{pmatrix} 0 \\ p \end{pmatrix}$. In this case, we get the following pair of congruences
\begin{equation}
\begin{cases}
&bp \equiv 0 \pmod{p} \\
&dp \equiv p \pmod{p^2}
\end{cases}
\end{equation}
Therefore $b$ is free and $d \equiv 1 \pmod{p}$. In this case $\phi$ has the following form $\begin{pmatrix} a & b \\ pc & jp+1 \end{pmatrix}$, where $a \in \mathbf{Z}_{p}^{\ast}$, $b,c,j \in \mathbf{Z}_{p}$. Now define
\begin{align*}
A&=\left\{ \begin{pmatrix} 1 & b \\ 0 & d \end{pmatrix}: b \in \mathbf{Z}_{p}, d \in \mathbf{Z}_{p^2}^{\ast} \right\} \\
B&=\left\{ \begin{pmatrix} a & b \\ pc & jp+1 \end{pmatrix}: a \in \mathbf{Z}_{p}^{\ast}, b,c,j \in \mathbf{Z}_{p} \right\}
\end{align*}
Then $A \cap B= \left\{ \begin{pmatrix} 1 & b \\ 0 & jp+1 \end{pmatrix}: b,j \in \mathbf{Z}_{p}\right\}$. Consequently, we obtain $p^2$ automorphisms.
Using a), b) and c) we obtain
\begin{align*}
\theta(\mathbf{Z}_{p} \oplus \mathbf{Z}_{p^2},p^2)&=p(p-1)-1+p(p-1)^2-(p-1)+p^2-1 \\
&=p^{3}-p-1.
\end{align*}
\end{proof}

\begin{lemma} \label{lemma4} Let $\alpha$ be a positive integer and let $p$ be a prime. Then $|\{a \in [1,p^{\alpha}-1]:(a,p^{\alpha})=(a-1,p^{\alpha})=1\}|=p^{\alpha}-2p^{\alpha-1}$.
\end{lemma}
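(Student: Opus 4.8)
The plan is to translate the two coprimality conditions into congruence conditions modulo $p$ and then count by residue classes. Since $p$ is prime, for any integer $a$ we have $(a,p^{\alpha})=1$ if and only if $p \nmid a$, and likewise $(a-1,p^{\alpha})=1$ if and only if $p \nmid (a-1)$. Hence the set whose cardinality we seek is exactly
$$\{a \in [1,p^{\alpha}-1] : a \not\equiv 0 \pmod{p} \ \text{and} \ a \not\equiv 1 \pmod{p}\}.$$

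First I would enlarge the range from $[1,p^{\alpha}-1]$ to the block $[1,p^{\alpha}]$ of $p^{\alpha}$ consecutive integers. The advantage is that such a block meets each residue class modulo $p$ in precisely $p^{\alpha-1}$ integers. The two forbidden residues are $0$ and $1$, leaving $p-2$ admissible classes, so the count over $[1,p^{\alpha}]$ equals $(p-2)p^{\alpha-1}=p^{\alpha}-2p^{\alpha-1}$.

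Next I would verify that reverting from $[1,p^{\alpha}]$ back to $[1,p^{\alpha}-1]$ leaves the count unchanged. The only integer removed is $a=p^{\alpha}$, and since $p \mid p^{\alpha}$ (as $\alpha \geq 1$), this value already violates $a \not\equiv 0 \pmod{p}$ and so was never among those counted. Therefore the cardinality over $[1,p^{\alpha}-1]$ coincides with $(p-2)p^{\alpha-1}$, which is the claimed value.

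There is no serious obstacle here; the only points requiring care are the boundary bookkeeping, namely confirming that each residue class modulo $p$ is hit exactly $p^{\alpha-1}$ times in a block of $p^{\alpha}$ consecutive integers and that the endpoint $p^{\alpha}$ is correctly excluded, together with a sanity check of the degenerate small cases such as $p=2$ or $\alpha=1$, where the formula still returns the correct (possibly zero) value.
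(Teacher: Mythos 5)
Your proof is correct and takes essentially the same route as the paper: both reduce the conditions $(a,p^{\alpha})=(a-1,p^{\alpha})=1$ to the residue conditions $a\not\equiv 0,1\pmod p$ and then count. The paper phrases the count as a set difference, $\varphi(p^{\alpha})-p^{\alpha-1}$, removing the class $a\equiv 1\pmod p$ from the units, while you count the $p-2$ admissible classes directly over a full block of $p^{\alpha}$ consecutive integers; the two computations are identical in substance.
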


\begin{proof} For simplicity, let us define $W=\{a \in [1,p^{\alpha}-1]: (a,p^{\alpha})=(a-1,p^{\alpha})=1\}$, $X=\{a \in [1,p^{\alpha}-1]: (a,p^{\alpha})=1\}$ and $Y=\{a \in [1,p^{\alpha}-1]: a \equiv 1 \pmod p\}$. Then $|W|=|X \setminus Y|=\varphi(p^{\alpha})-p^{\alpha-1}=p^{\alpha}-2p^{\alpha-1}$.
\end{proof}

\begin{lemma} \label{lemma5} Let $G$ be a finite abelian group and let $\phi \in \operatorname{Aut}G$. Then $\phi$ is a fixed-point-free automorphism if and only if $\phi-id_{G}$ is an automorphism.
\end{lemma}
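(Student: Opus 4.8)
The plan is to work additively and to pass to the endomorphism ring of $G$. First I would observe that because $G$ is abelian, the set $\operatorname{End}(G)$ of endomorphisms of $G$ is a ring under pointwise addition and composition; in particular the map $\psi := \phi - id_{G}$, given by $\psi(g)=\phi(g)-g$, is again an endomorphism of $G$. This is the one point I expect to require genuine care: checking that $\psi(g+h)=\psi(g)+\psi(h)$ amounts to rearranging $\phi(g)+\phi(h)-g-h$, which uses commutativity in an essential way and would fail for a general non-abelian $G$. So the abelian hypothesis is exactly what makes $\phi - id_{G}$ a legitimate endomorphism.

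Next I would identify the kernel of $\psi$. By definition $\psi(g)=0$ if and only if $\phi(g)=g$, so $\ker\psi = \Omega_{G}(\phi)$ is precisely the fixed-point set of $\phi$. Consequently $\phi$ is fixed-point-free, i.e. $\Omega_{G}(\phi)=\{0\}$, if and only if $\ker\psi$ is trivial, i.e. if and only if $\psi$ is injective.

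Finally I would invoke the finiteness of $G$: an injective endomorphism of a finite group is automatically surjective, hence an automorphism. This closes both directions at once. If $\phi$ is fixed-point-free then $\psi$ is injective and therefore an automorphism; conversely, if $\psi=\phi - id_{G}$ is an automorphism then it is injective, so $\ker\psi=\Omega_{G}(\phi)$ is trivial and $\phi$ is fixed-point-free. The argument is thus a short chain of equivalences once the ring structure of $\operatorname{End}(G)$ is in place, with the only substantive inputs being the abelian hypothesis (to guarantee $\phi-id_{G}$ is an endomorphism) and the finiteness hypothesis (to upgrade injectivity to bijectivity).
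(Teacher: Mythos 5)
Your argument is correct and is precisely the standard reasoning that the paper compresses into the single word ``Immediate'': $\phi-id_G$ is an endomorphism because $G$ is abelian, its kernel is exactly the fixed-point set of $\phi$, and injectivity upgrades to bijectivity by finiteness. Nothing is missing, and you rightly flag the abelian and finiteness hypotheses as the two places where the statement could otherwise fail.
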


\begin{proof} Immediate.
\end{proof}

\begin{proposition} \label{prop2} The number of fixed-point-free automorphisms of $\mathbf{Z}_{p} \oplus \mathbf{Z}_{p^2}$ is equal to $p^3(p-2)^2$; that is, $\theta(\mathbf{Z}_{p} \oplus \mathbf{Z}_{p^2},1)=p^3(p-2)^2$.
\end{proposition}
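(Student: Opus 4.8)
The plan is to reduce the count to a single application of Lemma \ref{lemma5} combined with the explicit matrix description of $\operatorname{Aut}(\mathbf{Z}_{p} \oplus \mathbf{Z}_{p^2})$ obtained just before Remark \ref{rem2}. By Lemma \ref{lemma5}, an automorphism $\phi$ is fixed-point-free precisely when $\phi - id_{G}$ is again an automorphism. So I would begin by writing a general automorphism as $\phi = \begin{pmatrix} a & b \\ pc & d \end{pmatrix}$ with $a \in \mathbf{Z}_{p}^{\ast}$, $d \in \mathbf{Z}_{p^2}^{\ast}$, and $b,c \in \mathbf{Z}_{p}$, and then observe that
\[
\phi - id_{G} = \begin{pmatrix} a-1 & b \\ pc & d-1 \end{pmatrix},
\]
which still lies in the endomorphism ring of $G$ since it has the required shape: top row in $\mathbf{Z}_{p}$, bottom-left a multiple of $p$, and bottom-right in $\mathbf{Z}_{p^2}$.

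Next I would record the criterion, implicit in the matrix characterization of $\operatorname{Aut}(\mathbf{Z}_{p}\oplus\mathbf{Z}_{p^2})$, that such a matrix represents an automorphism if and only if its top-left entry is a unit of $\mathbf{Z}_{p}$ and its bottom-right entry is a unit of $\mathbf{Z}_{p^2}$. The reason is that reducing the matrix modulo $p$ makes it upper triangular (the bottom-left entry being divisible by $p$), so invertibility is governed solely by the two diagonal entries, while $b$ and $c$ remain unconstrained. Applying this criterion to both $\phi$ and $\phi - id_{G}$ yields four simultaneous conditions: $a \in \mathbf{Z}_{p}^{\ast}$ together with $a-1 \in \mathbf{Z}_{p}^{\ast}$, and $d \in \mathbf{Z}_{p^2}^{\ast}$ together with $d-1 \in \mathbf{Z}_{p^2}^{\ast}$.

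It then remains to enumerate the admissible parameters. The conditions on $a$ say $a \not\equiv 0$ and $a \not\equiv 1 \pmod{p}$, giving $p-2$ choices; the conditions on $d$ say $(d,p^2)=(d-1,p^2)=1$, so Lemma \ref{lemma4} with $\alpha = 2$ gives $p^2 - 2p = p(p-2)$ choices; and $b,c$ contribute a free factor of $p$ each. Multiplying, I would obtain $(p-2)\cdot p(p-2)\cdot p\cdot p = p^{3}(p-2)^{2}$, as claimed.

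The only genuine subtlety, as opposed to routine bookkeeping, is the justification that the automorphism test for $\phi - id_{G}$ decouples into the two diagonal-entry conditions while leaving $b$ and $c$ entirely free; once this is secured the enumeration is immediate. The secondary point I would check carefully is that the simultaneous unit conditions on $d$ modulo $p^2$ are exactly the hypothesis of Lemma \ref{lemma4}, so that its count $p(p-2)$ may be invoked directly rather than re-derived.
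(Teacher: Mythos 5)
Your proposal is correct and follows essentially the same route as the paper: reduce via Lemma \ref{lemma5} to the condition that $\phi - id_{G}$ be an automorphism, read off the unit conditions on the diagonal entries from the matrix description of $\operatorname{Aut}(\mathbf{Z}_{p}\oplus\mathbf{Z}_{p^2})$, and count with Lemma \ref{lemma4}. Your added remark justifying why the automorphism test decouples into the two diagonal conditions (upper triangularity modulo $p$) is a point the paper leaves implicit, but the argument and the final count are identical.
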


\begin{proof} Let $\phi = \begin{pmatrix} a & b \\ pc & d \end{pmatrix}$ be an automorphism of $\mathbf{Z}_{p} \oplus \mathbf{Z}_{p^2}$. By Lemma \ref{lemma5}, $\phi$ is a fixed-point-free automorphism if and only if $\phi-id_{G}$ is an automorphism of $G$. Note that
\begin{center}
$\phi-id_{G}=\begin{pmatrix} a- 1 & b \\ pc & d-1 \end{pmatrix}$
\end{center}
and this will be an element of $\operatorname{Aut}(\mathbf{Z}_{p} \oplus \mathbf{Z}_{p^2})$ if and only if $(a-1,p)=1$ and $(d-1,p^{2})=1$. Therefore, we need to count the number of elements $a$ in $\mathbf{Z}_{p}^{\ast}$ such that $(a-1,p)=1$, and the number of $d \in \mathbf{Z}_{p^2}^{\ast}$ such that $(d-1,p^2)=1$. Applying Lemma \ref{lemma4} we obtain $p-2$ choices for $a$ and $p^2-2p$ choices for $d$. Since there are $p$ choices for $b$ and $p$ choices for $c$, we obtain $p^2(p^2-2p)(p-2)=p^3(p-2)^2$ fixed-point-free-automorphisms of $\mathbf{Z}_{p} \oplus \mathbf{Z}_{p^2}$, as claimed.
\end{proof}

\begin{proposition} \label{prop3} We have that $\theta(\mathbf{Z}_{p} \oplus \mathbf{Z}_{p^2},p)=p(2p^3-4p^2+1)$.
\end{proposition}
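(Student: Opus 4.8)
The plan is to avoid a direct enumeration and instead recover $\theta(G,p)$ by subtraction, exploiting that we already know the value of $\theta(G,d)$ for every other divisor $d$ of $|G|$. Write $G=\mathbf{Z}_{p}\oplus\mathbf{Z}_{p^2}$, so that $|G|=p^3$ and the only divisors of $|G|$ are $1,p,p^2,p^3$. By the remark following the definition of $\Omega^{-1}_{G}(H)$, the group $\operatorname{Aut}G$ decomposes as the disjoint union of the $d$-fixer sets $S_{d}^{G}$ as $d$ ranges over these four divisors, whence
\[
\theta(G,1)+\theta(G,p)+\theta(G,p^2)+\theta(G,p^3)=|\operatorname{Aut}G|=p^3(p-1)^2,
\]
the last equality being Remark \ref{rem2}.

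Three of the four summands are already in hand. Proposition \ref{prop2} gives $\theta(G,1)=p^3(p-2)^2$, Proposition \ref{prop1} gives $\theta(G,p^2)=p^3-p-1$, and the remaining term is immediate: an automorphism fixing all $p^3$ elements of $G$ must be the identity, so $\theta(G,p^3)=1$. Substituting and solving for the unknown term yields
\[
\theta(G,p)=p^3(p-1)^2-p^3(p-2)^2-(p^3-p-1)-1.
\]

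It then remains only to simplify. Using $p^3(p-1)^2-p^3(p-2)^2=p^3\big((p-1)^2-(p-2)^2\big)=p^3(2p-3)=2p^4-3p^3$, and subtracting $p^3-p-1$ and then $1$, I obtain $2p^4-4p^3+p=p(2p^3-4p^2+1)$, as claimed.

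In truth this argument has no serious obstacle: the only points requiring care are the verification that $1,p,p^2,p^3$ genuinely exhaust the divisors of $|G|$, so that no $d$-fixer class is overlooked in the partition of $\operatorname{Aut}G$, together with the bookkeeping in the final algebra. The laborious alternative, which I would avoid, is a direct count: the $p$-torsion subgroup of $G$ is isomorphic to $\mathbf{Z}_{p}\oplus\mathbf{Z}_{p}$ and contains exactly $p+1$ subgroups of order $p$, so one would have to compute $|\Omega^{-1}_{G}(H)|$ for each such $H$ (via the matrix form of $\operatorname{Aut}G$ and an inclusion–exclusion over the at-least-fixing sets) and then sum. The subtraction route sidesteps all of this and is the approach I would take.
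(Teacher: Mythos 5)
Your proposal is correct and is essentially identical to the paper's own proof: both obtain $\theta(G,p)$ by subtracting $\theta(G,1)$, $\theta(G,p^2)$, and $\theta(G,p^3)$ from $|\operatorname{Aut}G|=p^3(p-1)^2$ using Remark \ref{rem2} and Propositions \ref{prop1} and \ref{prop2}. The algebraic simplification to $p(2p^3-4p^2+1)$ also checks out.
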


\begin{proof} Using Remark \ref{rem2} and Propositions \ref{prop1}, \ref{prop2}, we have
\begin{align*}
\theta(G=\mathbf{Z}_{p} \oplus \mathbf{Z}_{p^2},p)&=|\operatorname{Aut}G|-\theta(G,1)-\theta(G,p^2)-\theta(G,p^3) \\
&=p^3(p-1)^2-p^3(p-2)^2-(p^3-p-1)-1 \\
&=2p^4-4p^3+p \\
&=p(2p^3-4p^2+1)
\end{align*}
\end{proof}
Since the identity map is the unique automorphism fixing the entire group, then $\theta(\mathbf{Z}_{p} \oplus \mathbf{Z}_{p^2},p^3)=1$.
Combining Propositions \ref{prop1}, \ref{prop2} and \ref{prop3} we obtain the following
\begin{theorem} \label{theo2} For the group $G=\mathbf{Z}_{p} \oplus \mathbf{Z}_{p^2}$, where $p$ is any prime: \\
\[
\theta(G,d)=
\begin{cases}
p^3(p-2)^{2} & \text{for} \ d=1; \\
p(2p^3-4p^2+1) & \text{for} \ d=p; \\
p^3-p-1 & \text{for} \ d=p^2; \\
1 & \text{for} \ d=p^3.
\end{cases}
\]
\end{theorem}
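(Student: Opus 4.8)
The plan is to exploit the fact that the divisors of $|G| = p^3$ are exactly $1, p, p^2, p^3$, so the four cases listed are exhaustive and mutually exclusive; computing $\theta(G,d)$ for each then determines the function completely. Throughout I would work with the explicit matrix realization of $\operatorname{Aut}(G)$ obtained above from Theorem \ref{theo1} and Lemmas \ref{lemma1}--\ref{lemma3}, namely the matrices $\begin{pmatrix} a & b \\ pc & d \end{pmatrix}$ with $a \in \mathbf{Z}_p^{\ast}$, $d \in \mathbf{Z}_{p^2}^{\ast}$, $b,c \in \mathbf{Z}_p$, and keep in mind that $|\operatorname{Aut}(G)| = p^3(p-1)^2$ by Remark \ref{rem2}. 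The key structural observation is that the fixed-point set $\Omega_G(\phi)$ of any automorphism is a subgroup of $G$, so its order is itself one of $1, p, p^2, p^3$.

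I would begin with the two extreme cases. For $d = p^3$ the only automorphism fixing all of $G$ is the identity, so $\theta(G, p^3) = 1$ immediately. For $d = 1$ I would invoke Lemma \ref{lemma5} to replace the fixed-point-free condition by the requirement that $\phi - \operatorname{id}_G$ be an automorphism; in matrix terms this amounts to $(a-1,p) = 1$ and $(d-1,p^2) = 1$, and counting the admissible $a, d$ by Lemma \ref{lemma4} (with $b, c$ free) yields $\theta(G,1) = p^3(p-2)^2$, as in Proposition \ref{prop2}.

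The substantial case is $d = p^2$, handled in Proposition \ref{prop1}. Here I would use that there are exactly $p+1$ subgroups of order $p^2$, the cyclic $J_k = \langle (k,1)^{\mathsf T}\rangle$ for $k \in \mathbf{Z}_p$ together with $J_p = \langle (1,0)^{\mathsf T}, (0,p)^{\mathsf T}\rangle$. For each such subgroup $H$ I would count the automorphisms fixing $H$ pointwise by imposing the fixing condition on its generator(s) and solving the resulting congruences for $a,b,c,d$. Since $H$ is maximal (index $p$), any automorphism fixing strictly more than $H$ fixes all of $G$ and is the identity; thus the identity is the only map lying in more than one of these pointwise-fixing sets, and subtracting it once from each count makes the resulting sets $\Omega^{-1}_G(H)$ genuinely disjoint. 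Summing over the $p+1$ subgroups then gives $\theta(G,p^2) = p^3 - p - 1$. The main obstacle lies precisely here: one must treat $J_p$ separately because it is two-generated, so fixing it pointwise requires intersecting the two solution sets coming from $(1,0)^{\mathsf T}$ and $(0,p)^{\mathsf T}$, and the bookkeeping of the subtracted identity across the whole family must be handled with care.

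Finally, rather than enumerate the subgroups of order $p$ directly, I would obtain the case $d = p$ by complementary counting: since the four cases partition $\operatorname{Aut}(G)$, Proposition \ref{prop3} gives $\theta(G,p) = |\operatorname{Aut}(G)| - \theta(G,1) - \theta(G,p^2) - \theta(G,p^3)$, which simplifies to $p(2p^3 - 4p^2 + 1)$. Assembling the four values yields the stated piecewise formula for $\theta(G,d)$.
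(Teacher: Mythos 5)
Your proposal is correct and follows essentially the same route as the paper: the trivial case $d=p^3$, Lemma \ref{lemma5} with Lemma \ref{lemma4} for $d=1$ (Proposition \ref{prop2}), enumeration of the $p+1$ subgroups of order $p^2$ with the identity subtracted from each pointwise-fixer count for $d=p^2$ (Proposition \ref{prop1}), and complementary counting for $d=p$ (Proposition \ref{prop3}). Your maximality observation justifying the disjointness of the sets $\Omega_G^{-1}(H)$ is a nice explicit articulation of what the paper leaves implicit.
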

This proves Conjecture \ref{conj1}.

\begin{remark} \label{rem3} In the case $p=2$, we can also argue that $\theta(\mathbf{Z}_{2} \oplus \mathbf{Z}_{4},1)=0$ using the following argument based on the characterization of $\operatorname{Aut}(\mathbf{Z}_{2} \oplus \mathbf{Z}_{4})$. By \cite[Lemma 11.1]{4} we have that $\operatorname{Aut}(\mathbf{Z}_{2} \oplus \mathbf{Z}_{4}) \cong D_{8}$, the dihedral group of the square. Now, realize $D_{8}$ as the unitriangular matrix group of degree three over $\mathbf{Z}_{2}$ (a.k.a the Heisenberg group modulo $2$). It is easy to check that every matrix in this group has characteristic polynomial equal to $(t-1)^3$ and thus $1$ is always an eigenvalue. Therefore, no fixed-point-free automorphism of $\mathbf{Z}_{2} \oplus \mathbf{Z}_{4}$ exists.
\end{remark}

\section{Number of fixed-point-free automorphisms of $\mathbf{Z}_{p^{a}} \oplus \mathbf{Z}_{p^{b}}$} \label{section4}

Let $a$ and $b$ be positive integers with $a<b$. By \cite[Theorem 3.2]{1}, $\operatorname{Aut}(\mathbf{Z}_{p^{a}} \oplus \mathbf{Z}_{p^{b}})$ may be realized as the following set of $2 \times 2$ matrices: \\

\begin{center}
$\left \{ \begin{pmatrix} \alpha & \beta \\ cp^{b-a} & \delta \end{pmatrix}: \alpha \in \mathbf{Z}_{p^{a}}^{\ast}, \beta,c \in \mathbf{Z}_{p^{a}}, \delta \in \mathbf{Z}_{p^{b}}^{\ast} \right\}$
\end{center}

\begin{proposition} \label{prop4} Let $a$ and $b$ be positive integers with $a<b$. The number of fixed-point-free automorphisms of $\mathbf{Z}_{p^{a}} \oplus \mathbf{Z}_{p^{b}}$ is equal to:
\begin{center}
$\theta(\mathbf{Z}_{p^{a}} \oplus \mathbf{Z}_{p^{b}},1)=p^{3a+b-2}(p-2)^2.$
\end{center}
\end{proposition}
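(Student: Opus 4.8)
The plan is to generalize the argument used in Proposition \ref{prop2} for the case $a=1$, $b=2$. The strategy rests on Lemma \ref{lemma5}: an automorphism $\phi$ of the finite abelian group $G=\mathbf{Z}_{p^{a}} \oplus \mathbf{Z}_{p^{b}}$ is fixed-point-free if and only if $\phi - \operatorname{id}_{G}$ is again an automorphism of $G$. So the entire problem reduces to counting, within the explicit matrix realization of $\operatorname{Aut}(\mathbf{Z}_{p^{a}} \oplus \mathbf{Z}_{p^{b}})$ given just before the statement, those matrices for which subtracting the identity produces another admissible matrix.

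First I would write a general automorphism as
\[
\phi=\begin{pmatrix} \alpha & \beta \\ cp^{b-a} & \delta \end{pmatrix},
\qquad
\alpha \in \mathbf{Z}_{p^{a}}^{\ast},\ \beta,c \in \mathbf{Z}_{p^{a}},\ \delta \in \mathbf{Z}_{p^{b}}^{\ast},
\]
so that
\[
\phi-\operatorname{id}_{G}=\begin{pmatrix} \alpha-1 & \beta \\ cp^{b-a} & \delta-1 \end{pmatrix}.
\]
The key observation is that the off-diagonal entries $\beta$ and $cp^{b-a}$ are unchanged by subtracting the identity, so they impose no new constraint: the matrix $\phi-\operatorname{id}_{G}$ lies in the admissible set precisely when its diagonal entries satisfy the membership conditions, namely $\alpha-1 \in \mathbf{Z}_{p^{a}}^{\ast}$ and $\delta-1 \in \mathbf{Z}_{p^{b}}^{\ast}$. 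Thus the count factors completely: I must count the $\alpha$ with both $\alpha$ and $\alpha-1$ coprime to $p^{a}$, times the $\delta$ with both $\delta$ and $\delta-1$ coprime to $p^{b}$, times the number of free choices for $\beta$ and $c$.

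Next I would invoke Lemma \ref{lemma4} twice. That lemma gives exactly $|\{\alpha : (\alpha,p^{a})=(\alpha-1,p^{a})=1\}| = p^{a}-2p^{a-1}=p^{a-1}(p-2)$ admissible choices for $\alpha$, and likewise $p^{b}-2p^{b-1}=p^{b-1}(p-2)$ admissible choices for $\delta$. The parameters $\beta$ and $c$ each range over all of $\mathbf{Z}_{p^{a}}$, contributing a factor of $p^{a}$ each. Multiplying gives
\[
p^{a-1}(p-2)\cdot p^{b-1}(p-2)\cdot p^{a}\cdot p^{a}
= p^{3a+b-2}(p-2)^2,
\]
which is the claimed formula. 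As a sanity check, setting $a=1,b=2$ recovers $p^{3}(p-2)^2$, consistent with Proposition \ref{prop2}.

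I do not anticipate a genuine obstacle here, since the calculation is essentially a bookkeeping exercise once Lemmas \ref{lemma4} and \ref{lemma5} are in hand. The one point that deserves a word of care is verifying that the off-diagonal entries really impose \emph{no} coprimality requirement in this matrix model — that admissibility of a matrix depends only on the diagonal units $\alpha$ and $\delta$ — so that subtracting the identity cannot accidentally break membership through the lower-left entry $cp^{b-a}$. This is exactly why the stated realization of $\operatorname{Aut}(\mathbf{Z}_{p^{a}} \oplus \mathbf{Z}_{p^{b}})$, with its fixed divisibility $p^{b-a}$ in the lower-left slot, is essential: the hypothesis $a<b$ guarantees the lower-left entry is a non-unit multiple of $p$ regardless of $c$, so it never interferes with the unit conditions on the diagonal.
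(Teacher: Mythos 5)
Your proposal is correct and follows essentially the same route as the paper: reduce via Lemma \ref{lemma5} to counting matrices $\phi$ with $\phi-\operatorname{id}_{G}$ still in the matrix realization, apply Lemma \ref{lemma4} to the two diagonal entries, and multiply by the $p^{a}$ free choices for each off-diagonal parameter. Your added remark that the off-diagonal entries impose no coprimality constraint is a point the paper leaves implicit, but the argument is the same.
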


\begin{proof} Let $\phi$ be an automorphism of $G=\mathbf{Z}_{p^{a}} \oplus \mathbf{Z}_{p^{b}}$. Then, by Lemma \ref{lemma5}, $\phi$ is a fixed-point-free automorphism if and only if $\phi-id_{G}$ is an automorphism. Therefore, it suffices to compute the cardinalities of the following sets
\begin{align*}
&A=\left\{\alpha \in \mathbf{Z}_{p^{a}}^{\ast}: (\alpha-1,p^{a})=1\right\} \\
&B= \left\{\delta \in \mathbf{Z}_{p^{b}}^{\ast}: (\delta-1,p^{b})=1\right\}
\end{align*}
Applying Lemma \ref{lemma4} yields
\begin{align*}
|A|&=p^{a}-2p^{a-1}=p^{a-1}(p-2) \\
|B|&=p^{b}-2p^{b-1}=p^{b-1}(p-2)
\end{align*}
Since there are $p^{a}$ possible choices for each $\beta$ and $c$, then the total number of fixed-point-free automorphisms is equal to
\begin{center}
$p^{2a}p^{a-1}(p-2)p^{b-1}(p-2)=p^{3a+b-2}(p-2)^2$
\end{center} 
and the proof is now complete.
\end{proof}

\begin{remark} \label{rem4} The above result implies that all the groups $\mathbf{Z}_{2^{a}} \oplus \mathbf{Z}_{2^{b}}$, where $1 \leq a<b$, do not admit fixed-point-free automorphisms. This generalizes the result mentioned in Remark \ref{rem3}.
\end{remark}

\section{$\theta$ values for the dihedral group $D_{2p}$} \label{section5}

The description of $D_{2n}$ is obtained by using its generators: a rotation $a$ of order $n$ and a reflection $b$ of order
$2$. Using rotations and reflections, we can write a presentation of $D_{2n}$
\begin{align*}
D_{2n} &= \langle a,b: a^{n}=b^{2}=(ba)^{2}=1 \rangle \\
&=\left \{1,a,a^{2},\ldots,a^{n-1},b,ab,a^{2}b,\ldots,a^{n-1}b\right\}.
\end{align*}
Recall that $\operatorname{Aut}D_{2n} \cong \operatorname{Hol}(\mathbf{Z}_{n})$. More precisely,
$$\operatorname{Aut}D_{2n}=\{f_{\alpha, \beta}:\alpha \in \mathbf{Z}_{n}^{\ast}, \beta \in \mathbf{Z}_{n} \},$$
where $f_{\alpha, \beta}: D_{2n}\rightarrow D_{2n}$ is defined by
$$f_{\alpha, \beta}(a^{i}) = a^{\alpha i}$$ and
$$f_{\alpha, \beta} (a^{i}b) = a^{\alpha i + \beta}b,$$
for all $i=0,\ldots,n-1.$

\begin{remark} \label{rem5} The above implies that $|\operatorname{Aut}D_{2n}|=n\varphi(n)$.
\end{remark}
\begin{example}
For $n=4$, the dihedral group $D_{8}$ is the group of symmetries of the square.
$$D_{8}=\langle a,b : a^{4}=b^{2}=1 , bab=a^{-1}\rangle=\{1, a, a^{2},a^{3}, b, ab, a^{2}b,
a^{3}b\}$$ and its automorphism group is 
\begin{eqnarray*}
\operatorname{Aut}D_{8} &=& \{f_{\alpha, \beta}:  \alpha \in \mathbf{Z}_{4}^{\ast},~~\beta \in \mathbf{Z}_{4}  \}\\
 &=& \{ f_{1,0}, f_{1,1}, f_{1, 2}, f_{1, 3}, f_{3,0} , f_{3, 1}, f_{3, 2}, f_{3, 3}\}.
\end{eqnarray*}
In the following table, the images of elements of $D_{8}$ under all
automorphisms are given.
\begin{center}
\begin{tabular}{|c|c|c|c|c|c|c|c|c|}
\hline
g & $f_{1,0}(g)$ & $f_{1,1}(g)$ & $f_{1,2}(g)$ & $f_{1,3}(g)$ & $f_{3,0}(g)$ & $f_{3,1}(g)$ & $f_{3,2}(g)$ & $f_{3,3}(g)$\\
\hline
$1$      & $1$      & $1$      & $1$      & $1$      & $1$      & $1$      & $1$      & $1$ \\
\hline
$a$      & $a$      & $a$      & $a$      & $a$      & $a^{3}$  & $a^3$      & $a^{3}$  & $a^{3}$ \\
\hline
$a^{2}$  & $a^{2}$  & $a^{2}$  & $a^{2}$  & $a^{2}$  & $a^{2}$  & $a^{2}$  & $a^{2}$  & $a^{2}$ \\
\hline
$a^{3}$  & $a^{3}$  & $a^{3}$  & $a^{3}$  & $a^{3}$  & $a$      & $a$      & $a$      & $a$ \\
\hline
$b$      & $b$      & $ab$     & $a^{2}b$ & $a^{3}b$ & $b$      & $ab$     & $a^{2}b$ & $a^{3}b$ \\
\hline
$ab$     & $ab$     & $a^{2}b$ & $a^{3}b$ & $b$      & $a^{3}b$ & $b$      & $ab$     & $a^{2}b$ \\
\hline
$a^{2}b$ & $a^{2}b$ & $a^{3}b$ & $b$      & $ab$     & $a^{2}b$ & $a^{3}b$ & $b$      & $ab$ \\
\hline
$a^{3}b$ & $a^{3}b$ & $b$      & $ab$     & $a^{2}b$ & $ab$     & $a^{2}b$ & $a^{3}b$ & $b$  \\
\hline
\end{tabular}
\end{center}
\end{example}

We now prove the main result of this section.
\begin{theorem} \label{theo3}
For $D_{2p}$, where $p$ is an odd prime, we have
\[
\theta(D_{2p}, d)=
\begin{cases}
0 &\text{if d=1};\\
p(p-2) &\text{if d=2};\\
p-1 &\text{if d=p};\\
1 &\text{if d=$2p$}.
\end{cases}
\]
\begin{proof} a) We first deal with the case $d=2p$. Since the identity map $f_{1, 0}$ is the unique automorphism that fixes the whole group $D_{2p}$, it follows that $\theta(D_{2p}, 2p)=1$.\\
b) Now let us treat the case $d=p$. Since the unique subgroup of $D_{2p}$ of order $p$ is given by $H=\langle a \rangle$, then $S_{p}^{D_{2p}}=\Omega_{D_{2p}}^{-1}(H)$. Let us show that
\begin{center}
$S_{p}^{D_{2p}}=\Omega_{D_{2p}}^{-1}(H)=\{f_{1,i}: i=1,\ldots,p-1\}.$
\end{center}
For this, let $1\leq i\leq p-1$ be any integer. Then $$f_{1, i}(a^{i})=a^{i}$$
and
$$f_{1, i}(a^{j}b)=a^{j+i}b\neq a^{i}b.$$It follows that $f_{1, i} \in S_{p}^{D_{2p}}$.\\
Now suppose that $f_{\alpha, \beta} \in S_{p}^{D_{2p}}$, then $f_{\alpha, \beta }$
fixes $H$. Hence $$f_{\alpha,
\beta}(a^{i})=a^{i}$$ but $$f_{\alpha, \beta}(a^{i}b)\neq a^{i}b.$$ Then it easily follows that $\alpha=1$ and $\beta \neq 0$. Hence $f_{\alpha, \beta}$ is of the form $f_{1, i}$, where $i=1,\ldots, p-1$. We conclude that $\theta(D_{2p},p)=|S_{p}^{D_{2p}}|=p-1$.\\

c) We now compute $\theta(D_{2p},2)$. Let $\alpha \in \mathbf{Z}_{p}^{\ast} \setminus \{1\}$ and $\beta \in \mathbf{Z}_{p}$, then we claim that there exists a unique $i$ such that $$f_{\alpha,
\beta}(a^{i})=a^{\alpha i }\neq a^{i}$$ and $$f_{\alpha,
\beta}(a^{i}b)=a^{i}b.$$ 

Indeed, take $i\equiv (1-\alpha)^{-1}\beta \pmod{p}$. Therefore, each $f_{\alpha, \beta}$ fixes $a^{i}b$ for such a unique $i$. That is, $f_{\alpha, \beta}$ fixes exactly two elements: $1$ and
$a^{i}b$. There are $p(p-2)$ such $f_{\alpha, \beta}$, where $\alpha \in \mathbf{Z}_{p}^{\ast} \setminus \{1\}$ and $\beta \in \mathbf{Z}_{p}$.\\
Therefore $$\theta(D_{2p}, 2)=p(p-2).$$

d) Finally, we compute the number of fixed-point-free automorphisms of $D_{2p}$. Using Remark \ref{rem5}, together with a), b) and c), we get
\begin{eqnarray*}
\theta(D_{2p}, 1) &=& p\varphi(p)-\sum_{d|2p, d\neq 1}\theta(D_{2p}, d)\\
&=& p(p-1)-(1+p-1+p(p-2))\\
&=& 0
\end{eqnarray*}
as claimed.
\end{proof}
\end{theorem}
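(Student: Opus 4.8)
The plan is to compute, directly and uniformly, the cardinality of the fixed-point set of an arbitrary automorphism $f_{\alpha,\beta}$, exploiting the fact that $p$ prime makes $\mathbf{Z}_{p}$ a field. From the description of $\operatorname{Aut}D_{2p}$ we have $f_{\alpha,\beta}(a^{i})=a^{\alpha i}$ and $f_{\alpha,\beta}(a^{i}b)=a^{\alpha i+\beta}b$. Hence $a^{i}$ is fixed exactly when $(\alpha-1)i\equiv 0\pmod p$, and $a^{i}b$ is fixed exactly when $(\alpha-1)i\equiv-\beta\pmod p$. The entire theorem thus reduces to counting solutions of these two linear congruences modulo $p$, and the natural organizing principle is a case split on whether $\alpha=1$.

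First I would treat $\alpha=1$. Then the first congruence holds for every $i$, so all $p$ rotations are fixed, while the second congruence collapses to $\beta\equiv 0\pmod p$. If $\beta=0$ we recover the identity, giving $|\Omega_{D_{2p}}(f_{1,0})|=2p$; if $\beta\neq 0$, no reflection is fixed and $|\Omega_{D_{2p}}(f_{1,\beta})|=p$. This simultaneously disposes of $d=2p$ (the single automorphism $f_{1,0}$) and $d=p$ (the $p-1$ maps $f_{1,\beta}$ with $\beta\in\mathbf{Z}_{p}\setminus\{0\}$).

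Next, for $\alpha\neq 1$ the element $\alpha-1$ is a nonzero element of the field $\mathbf{Z}_{p}$, hence a unit. Consequently the first congruence forces $i=0$, so the identity is the only fixed rotation, and the second congruence has the unique solution $i\equiv(\alpha-1)^{-1}(-\beta)\pmod p$, so exactly one reflection is fixed. Thus $|\Omega_{D_{2p}}(f_{\alpha,\beta})|=2$ for every such automorphism, and counting the $p-2$ admissible values of $\alpha$ (units other than $1$) against the $p$ values of $\beta$ yields $p(p-2)$ automorphisms with $d=2$.

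Finally, the case $d=1$ comes for free: the two cases above exhaust $\operatorname{Aut}D_{2p}$, and in every instance the number of fixed points lies in $\{2,p,2p\}$ and is never $1$, so $\theta(D_{2p},1)=0$. As a consistency check one verifies $1+(p-1)+p(p-2)=p(p-1)=|\operatorname{Aut}D_{2p}|$ via Remark \ref{rem5}. I do not anticipate a genuine obstacle; the only delicate point is the invertibility of $\alpha-1$, which is precisely where primality of $p$ is used and which, by forcing a unique fixed reflection whenever $\alpha\neq 1$, is exactly what rules out any fixed-point-free automorphism.
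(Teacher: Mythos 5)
Your proof is correct, and the underlying computation is the same as the paper's: both reduce fixing $a^{i}$ and $a^{i}b$ to the linear congruences $(\alpha-1)i\equiv 0$ and $(\alpha-1)i\equiv-\beta \pmod p$ and exploit invertibility of $\alpha-1$ when $\alpha\neq 1$. The organization differs in one worthwhile way: you classify every $f_{\alpha,\beta}$ by the size of its fixed-point set via a single case split on $\alpha$, which shows directly that $|\Omega_{D_{2p}}(f_{\alpha,\beta})|\in\{2,p,2p\}$ always, so $\theta(D_{2p},1)=0$ falls out immediately; the paper instead obtains the $d=1$ count by subtracting the other three cases from $|\operatorname{Aut}D_{2p}|=p\varphi(p)$. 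Your version also makes explicit a point the paper's part (c) leaves implicit, namely that when $\alpha\neq 1$ no nontrivial rotation is fixed (since $\alpha-1$ is a unit), which is needed to conclude that such an automorphism fixes \emph{exactly} two elements rather than at least two. The paper's subtraction argument is slightly shorter for $d=1$ but depends on the order formula from Remark \ref{rem5}, which in your treatment is demoted to a consistency check.
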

\begin{example}
For $p=5$, we have $$D_{10}=\{1,a,a^{2}, a^{3}, a^{4}, b, ab, a^{2}b, a^{3}b,
a^{4}b \}$$ and
$$\operatorname{Aut}D_{10}=\{f_{\alpha,\beta}: \alpha \in \mathbf{Z}_{5}^{\ast}, \beta\in \mathbf{Z}_{5}\}.$$
Then,
$S_{1}^{D_{10}}=\{\}$; $S_{2}^{D_{10}}=\{f_{\alpha, \beta}: \alpha\in \{2,3,4\}, \beta \in \{0,1,2,3,4\}\}$; $S_{5}^{D_{10}}=\{f_{1,1},f_{1,2},f_{1,3},f_{1,4}\}$ and $
S_{10}^{D_{10}}=\{I\}$. Thus
\[
\theta(D_{10}, d)=
\begin{cases}
0 &\text{if d=1};\\
15 &\text{if d=2};\\
4 &\text{if d=5};\\
1 &\text{if d=$10$}.
\end{cases}
\]
\end{example}
\begin{remark} Theorem \ref{theo3} does not hold if $p$ is not prime. For example, consider $D_{8}$. One can see that $S_{1}^{D_{8}}=\{\}$; $S_{2}^{D_{8}}=\{f_{3,1},f_{3,3}\}$; $S_{4}^{D_{8}}=\{f_{1,1},f_{1,2},f_{1,3},f_{3,0},f_{3,2}\}$ and $S_{8}^{D_{8}}=\{I\}$. Hence $\theta(D_{8}, 2)=2\neq 4(4-2)$ and $\theta(D_{8},4)=5\neq 4-1$.
\end{remark}

Finally, we end the paper with the following \\

\textbf{Question}. Let $p$ be a prime number and $a_{1},\ldots,a_{n}$ be distinct positive integers. Is there an exact formula for $\theta \left(\displaystyle \bigoplus_{i=1}^{n} \mathbf{Z}_{p^{a_{i}}},d\right)$?

\section*{acknowledgments}
We thank the anonymous referees for their comments and careful reading of the manuscript.

\end{document}